\newtheorem{theorem}{Theorem}
\newtheorem{lemma}[theorem]{Lemma}
\newtheorem{corollary}[theorem]{Corollary}
\newcommand{\GG}{\mathcal{G}}
\newcommand{\PP}{\mathcal{P}}
\newcommand{\bb}[1]{\mathbf{#1}}
\newcommand{\dd}{\widetilde{\nabla}}
\newcommand{\ddin}{\widetilde{\nabla}^\circ}
\newcommand{\ddinex}{\widetilde{\nabla}^e}
\title{Induced subdivisions and bounded expansion}
\author{Zden\v{e}k Dvo\v{r}\'{a}k\thanks{Charles University, Prague, Czech Republic.
E-mail: {\tt rakdver@iuuk.mff.cuni.cz}.  Supported by the Center of Excellence -- Inst. for Theor. Comp. Sci., Prague (project P202/12/G061 of Czech Science Foundation).}}
\date{}
\begin{document}
\maketitle

\begin{abstract}
We prove that for every graph $H$ and for every integer $s$, the class of graphs
that do not contain $K_s$, $K_{s,s}$, or any subdivision of $H$ as an induced subgraph
has bounded expansion; this strengthens a result of K\"uhn and Osthus~\cite{kuhndens}.
The argument also gives another characterization of graph classes with bounded expansion
and of nowhere-dense graph classes.
\end{abstract}

For a non-negative integer $k$, a \emph{$(\le\!k)$-subdivision} of a graph $H$ is a graph obtained from $H$ by subdividing
each of its edges by at most $k$ vertices (not necessarily the same number on each edge).
For a graph $G$, let $\dd_k(G)$ denote the maximum average degree of a graph $H$ whose $(\le\!k)$-subdivision appears as a subgraph in $G$;
in particular, $\dd_0(G)$ is the maximum average degree of a subgraph of $G$.  We say that a class of graphs $\GG$ has \emph{bounded expansion}
if there exists a function $f:\bb{Z}_0^+\to\bb{Z}_0^+$ such that for every $G\in \GG$ and every non-negative integer $k$,
$\dd_k(G)\le f(k)$.  We say that $\GG$ is \emph{nowhere-dense} if there exists a function $h:\bb{Z}_0^+\to\bb{Z}_0^+$ such that for every $G\in \GG$ and every non-negative integer $k$,
$G$ does not contain a $(\le\!k)$-subdivision of $K_{h(k)}$ as a subgraph.

There are many equivalent definitions of nowhere-dense
graph classes and classes of graphs with bounded expansion~\cite{nesbook}.  For example, to clarify the relationship to bounded expansion
classes, it is also possible to define nowhere dense classes in the terms of the density of graphs whose bounded depth subdivisions
appear in the graphs of the class, as follows.
A function $g:\bb{Z}_0^+\to\bb{Z}_0^+$ is \emph{subpolynomial} if $\lim_{n\to\infty} \frac{\log g(n)}{\log n}=0$,
and a function $f:\bb{Z}_0^+\times \bb{Z}_0^+\to\bb{Z}_0^+$ is \emph{subpolynomial in the second argument} if
for every non-negative integer $k$, the function $f(k,\cdot)$ is subpolynomial.
A class $\GG$ is nowhere-dense if and only if
there exists a function $f:\bb{Z}_0^+\times \bb{Z}_0^+\to\bb{Z}_0^+$ subpolynomial in the second argument
such that for every non-negative integer $k$ and graph $G\in\GG$, each subgraph $G'$ of $G$ satisfies $\dd_k(G')\le f(k,|V(G')|)$.

The notions of bounded expansion and nowhere-denseness formalize in a robust way the concept of sparseness of a graph class.
Such graph classes have a number of important algorithmic and structural properties,
including fixed-parameter tractability of any problem expressible in first-order logic when restricted to
a class with bounded expansion~\cite{dkt-fol} or a nowhere-dense class~\cite{grohe2014deciding}
and existence of low tree-depth colorings~\cite{grad1}.
Also importantly, many naturally defined graph classes have bounded expansion, including
proper minor closed classes, classes with bounded maximum degree, and more generally proper classes
closed on topological minors, graphs drawn in a fixed surface with a bounded number of crossings on each edge,
all graph classes with strongly sublinear separators,
and many others.  We refer the reader to the book of Ne{\v{s}}et\v{r}il and Ossona de Mendez~\cite{nesbook}
for a more thorough treatment of the subject.

Most of the mentioned examples of graph classes with bounded expansion are closed on taking subgraphs.
Graph classes that are only closed on induced subgraphs, and in particular graph classes characterized
by forbidden induced minors or induced subdivisions, has been less studied in the context.  The major
issue is that such classes typically contain arbitrarily large cliques or bicliques (balanced complete bipartite
graphs $K_{s,s}$), which have unbounded minimum degree.  However, K\"uhn and Osthus~\cite{kuhndens}
shown that at least with regards to the maximum average degree $\dd_0$, this is the only obstruction.

\begin{theorem}[{K\"uhn and Osthus~\cite[Theorem 1]{kuhndens}}]\label{thm-kuhnost}
For every graph $H$ and a positive integer $s$, there exists an integer $d$ such that
every graph with average degree at least $d$ contains either $K_{s,s}$ as a subgraph or a subdivision of $H$ as an \emph{induced} subgraph.
\end{theorem}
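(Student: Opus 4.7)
The plan is to combine two classical ingredients: a clique-subdivision theorem (Bollob\'as--Thomason, or Koml\'os--Szemer\'edi for sharper bounds) guaranteeing that sufficiently high average degree forces a subdivision of a large clique $K_N$ as a subgraph, and the K\H{o}v\'ari--S\'os--Tur\'an consequence of $K_{s,s}$-freeness, which says that the number of edges between any two vertex sets of size $M$ is $O(M^{2-1/s})$. Concretely, I would fix $N$ extremely large relative to $|V(H)|$ and $s$, pick $d$ big enough that any graph of average degree at least $d$ contains a subdivision $S$ of $K_N$ as a subgraph, and pass to such an $S$, with branch vertices $v_1,\dots,v_N$ and connecting paths $P_{ij}$.

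The next step is to clean $S$ into an induced subdivision. The edges of $G[V(S)]\setminus E(S)$ come in four flavors: chords inside a single $P_{ij}$, extra edges between two branch vertices, edges from a branch vertex $v_k$ to the interior of some $P_{ij}$ with $k\notin\{i,j\}$, and edges between the interiors of two different paths. I would encode the qualitative \emph{interaction pattern} of each unordered $4$-subset $\{i,j,k,l\}\subseteq\{1,\dots,N\}$ by a color from a bounded palette (capturing, for example, whether there are crossing edges between $P_{ij}$ and $P_{kl}$ and whether branch vertices see the other pair's paths), and apply the hypergraph Ramsey theorem to extract a still-enormous homogeneous $I\subseteq\{1,\dots,N\}$. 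On $I$ the bad-edge pattern is uniform across all pairs of paths; and any color in which every pair has many crossings would, summed over pairs, yield a forbidden $K_{s,s}$ via K\H{o}v\'ari--S\'os--Tur\'an. So the surviving colors are sparse enough that $S$ restricted to $I$ has only a controlled, uniform type of bad edge.

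The final and most delicate step is turning ``few uniform bad edges per pair'' into an actual induced subdivision of $H$. The tool is local rerouting: if a selected path $P_{ij}$ has a problematic edge with some other part of the intended structure, replace it by the two-leg path $P_{ik}\cup P_{kj}$ through a fresh auxiliary index $k\in I$ unused by the embedding of $H$. Since $|I|\gg|V(H)|$ and the Ramsey step has ensured that different choices of $k$ give paths of the same interaction type, one can iterate the process finitely many times to eliminate the remaining conflicts. I expect the main obstacle to be exactly this bookkeeping -- making sure each reroute removes a conflict without creating a new one -- and getting the parameter dependencies (size of the Ramsey palette, the rate at which $N$ must grow) to close up; any slack there will force $d$ to depend at least tower-exponentially on $|V(H)|$ and $s$.
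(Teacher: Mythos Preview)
Note first that the paper does not prove Theorem~\ref{thm-kuhnost}: it is quoted from K\"uhn and Osthus and used as a black box (alongside their Lemmas~18 and~20, restated here as Lemmas~\ref{lemma-inducehats} and~\ref{lemma-fixb}). Those lemmas indicate that the original argument is not a Ramsey-on-a-$K_N$-subdivision scheme at all, but rather an iterative extraction of an induced $1$-subdivision via an auxiliary bipartite ``hat'' structure; so even if your outline went through, it would be a genuinely different proof.

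There is, however, a real gap in your plan, and it is not merely bookkeeping. The Bollob\'as--Thomason and Koml\'os--Szemer\'edi theorems give a $K_N$-subdivision with \emph{no control on the lengths of the connecting paths}. Suppose your Ramsey step returns the homogeneous colour ``for $i<j<k<l$ there is at least one edge between the interiors of $P_{ij}$ and $P_{kl}$.'' This yields on the order of $|I|^4$ such edges, but they live among the internal vertices of $\binom{|I|}{2}$ paths of unbounded total length, so K\H{o}v\'ari--S\'os--Tur\'an gives nothing; a bounded palette cannot encode ``many versus few'' crossings in a way that survives arbitrarily long paths. Nor does rerouting rescue you from this colour: replacing $P_{ij}$ by $P_{ik}\cup P_{kj}$ through a fresh $k$ trades one crossing with $P_{ab}$ for two, since by homogeneity each of $P_{ik}$ and $P_{kj}$ also crosses $P_{ab}$, and iterating only compounds the problem. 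Thus the intended dichotomy ``either the bad colour forces $K_{s,s}$, or the good colour lets rerouting finish'' does not close. Circumventing exactly this obstacle is the substantive content of the K\"uhn--Osthus argument.
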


Note that if $G$ contains a large biclique as a subgraph, applying Ramsey's theorem to each part of the biclique gives either
a large clique or a large biclique as an induced subgraph.  Hence, the previous theorem can be reformulated as follows.

\begin{corollary}\label{cor-kuhnost}
For every graph $H$ and a positive integer $s$, there exists an integer $d$ as follows.
If $\GG$ is a class of graphs that do not contain $K_s$, $K_{s,s}$, or any subdivision
of $H$ as an induced subgraph, then $\dd_0(G)\le d$ for all $G\in\GG$.
\end{corollary}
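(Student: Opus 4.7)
The plan is to reduce Corollary~\ref{cor-kuhnost} to Theorem~\ref{thm-kuhnost} using two ingredients: the observation that $\dd_0(G)$ is attained by an \emph{induced} subgraph of $G$, and the Ramsey-theoretic remark singled out in the paragraph immediately preceding the statement.

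First I would choose $t = R(s,s)$, where $R$ denotes the usual Ramsey number, and let $d$ be the constant produced by Theorem~\ref{thm-kuhnost} applied to $H$ and $t$; I claim this $d$ works. Suppose for contradiction that some $G \in \GG$ satisfies $\dd_0(G) > d$. A crucial preliminary is that the maximum average degree of a subgraph of $G$ is always realised by an induced subgraph: if a subgraph $G'$ has $n$ vertices and $m$ edges, then $G[V(G')]$ has $n$ vertices and at least $m$ edges, and hence average degree no smaller than that of $G'$. Consequently some induced subgraph $G^\star$ of $G$ has average degree greater than $d$.

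Now I apply Theorem~\ref{thm-kuhnost} to $G^\star$. In one case, $G^\star$ contains a subdivision of $H$ as an induced subgraph; since $G^\star$ is itself induced in $G$, the same subdivision is induced in $G$, contradicting the defining property of $\GG$. In the other case, $G^\star$, and therefore $G$, contains $K_{t,t}$ as a (not necessarily induced) subgraph on parts $A$ and $B$ of size $t$ each. The choice $t = R(s,s)$ guarantees that Ramsey's theorem applied to $G[A]$ and to $G[B]$ produces within each of them a subset of size $s$ that is either a clique or an independent set of $G$. If any such subset is a clique, we obtain an induced $K_s$ in $G$; otherwise both sides yield independent subsets $A' \subseteq A$ and $B' \subseteq B$ of size $s$, and since every edge of the $K_{t,t}$ subgraph is present in $G$, the set $A' \cup B'$ induces a $K_{s,s}$ in $G$. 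Every subcase contradicts $G \in \GG$.

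I do not anticipate a real obstacle: the argument is a short two-step reduction. The only point requiring care is the observation that passing from an arbitrary subgraph to the induced subgraph on the same vertex set can only increase the average degree; it is precisely this remark that permits the induced subdivision produced by Theorem~\ref{thm-kuhnost} to be transported from the auxiliary graph $G^\star$ back to $G$ itself.
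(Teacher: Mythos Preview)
Your proof is correct and follows exactly the approach the paper sketches in the sentence preceding the corollary: apply Theorem~\ref{thm-kuhnost} with the Ramsey number $t=R(s,s)$ in place of $s$, and in the biclique case run Ramsey on each side to extract an induced $K_s$ or an induced $K_{s,s}$. You also make explicit the point the paper leaves implicit, namely that $\dd_0(G)$ is realised by an \emph{induced} subgraph, which is precisely what is needed so that the induced subdivision of $H$ produced inside $G^\star$ remains induced in $G$.
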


In this note, we show that the result of K\"uhn and Osthus~\cite{kuhndens} can be easily extended to
prove that such graph classes actually have bounded expansion.

\begin{theorem}\label{thm-main1}
For every graph $H$ and a positive integer $s$, if $\GG$ is a class of graphs that do not
contain $K_s$, $K_{s,s}$, or any subdivision of $H$ as an induced subgraph, then $\GG$ has bounded expansion.
\end{theorem}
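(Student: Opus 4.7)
\medskip
\noindent\textbf{Proof proposal.} The plan is to proceed by induction on $k$, showing that $\dd_k(G)\le f(k)$ for every $G\in\GG$, where $f$ depends on $H$ and $s$. The base case $k=0$ is precisely Corollary~\ref{cor-kuhnost}.

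For the inductive step, assume the bound $\dd_{k-1}(G')\le f(k-1)$ for all $G'\in\GG$, and let $G\in\GG$ contain a $(\le\!k)$-subdivision $S$ of a graph $H'$ as a subgraph. I would partition $E(H')$ according to the length of the corresponding path in $S$: let $E_{<}$ be the set of edges whose corresponding path has length at most $k$, and $E_{=}$ the set of edges of length exactly $k+1$. If $|E_{<}|\ge |E(H')|/2$, then the corresponding portion of $S$ is a $(\le\!k-1)$-subdivision of the spanning subgraph $H''\subseteq H'$ with edge set $E_{<}$, whose average degree is at least half that of $H'$. The inductive hypothesis bounds $\dd_{k-1}(G)$, and hence also the average degree of $H'$.

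The remaining case is that $|E_{=}|\ge |E(H')|/2$, so many edges of $H'$ are realized by paths with exactly $k$ internal vertices each. Here I would invoke Corollary~\ref{cor-kuhnost} applied to the induced subgraph $G[V(S)]$ of $G$, which again avoids $K_s$, $K_{s,s}$, and induced subdivisions of $H$ and therefore has bounded average degree. Combining this bound with the counting of edges contributed by $S$ itself constrains how many ``chord'' edges can occur between and within the paths of $S$. The aim is to exploit these constraints, via a Ramsey-type argument on the branch vertices (using that a $K_{s,s}$ subgraph of $G$ yields, by Ramsey applied inside each side, an induced $K_s$ or $K_{s,s}$, contradicting the hypothesis on $\GG$), to either locate an induced subdivision of $H$ or replace the configuration by a denser $(\le\!k-1)$-subdivision of some graph in $G$, contradicting the inductive hypothesis.

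I expect the length-$(k+1)$ case to be the main obstacle. A direct density calculation on $G[V(S)]$ alone is too weak because taking a $(\le\!k)$-subdivision of $H'$ dilutes the average degree of $S$ by a factor of roughly $k$, so Corollary~\ref{cor-kuhnost} applied naively yields only a vanishing bound. The key use of the induced-subgraph hypothesis will therefore be a more careful iterative cleanup of chords between path interiors, using both Corollary~\ref{cor-kuhnost} and the Ramsey reduction to the induced $K_s$/$K_{s,s}$ alternative.
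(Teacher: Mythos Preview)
Your overall framework---induction on $k$, splitting the edges of $H'$ according to whether the realising path has length $\le k$ or exactly $k+1$, and handling the first case by the inductive hypothesis---matches the skeleton of the paper's Lemma~\ref{lemma-main1}. You also correctly diagnose that the na\"\i ve density bound from Corollary~\ref{cor-kuhnost} applied to $G[V(S)]$ loses a factor of $k$ and is therefore useless by itself. So far so good.

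The gap is precisely where you expect it: you have no concrete mechanism for the length-$(k{+}1)$ case. The ``iterative cleanup of chords'' and the dichotomy ``either find an induced subdivision of $H$ or produce a denser $(\le k{-}1)$-subdivision'' are not fleshed out, and in fact neither the Ramsey argument on branch vertices nor the hope of manufacturing a denser shallower subdivision is how the paper proceeds. What is actually needed is substantially more delicate: the paper invokes two technical lemmas of K\"uhn and Osthus (Lemmas~\ref{lemma-inducehats} and~\ref{lemma-fixb} here) on \emph{hats} in an auxiliary bipartite graph. One first colours an auxiliary conflict graph on the length-$(k{+}1)$ paths (using the bound on $\dd_0(G)$) to extract a large subfamily of paths whose interiors are pairwise non-adjacent in $G$; then the hats machinery prunes away the remaining chords from path interiors to branch vertices, producing an \emph{induced} $k$-subdivision of a graph of average degree at least $r$ inside $G$. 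This is the step your proposal is missing, and I do not see how to replace it by a Ramsey argument on branch vertices alone.

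There is a second ingredient you do not mention. Once one has an induced $k$-subdivision of a dense graph $H_1$ in $G$, the hypothesis on $\GG$ forces $H_1$ to contain no subdivision of $H$ (since $k\ge 1$, any such subdivision would survive as an induced subdivision of $H$ in $G$). One then needs the Koml\'os--Szemer\'edi/Bollob\'as--Thomason theorem (Theorem~\ref{thm-clique}) to conclude that the average degree of $H_1$ is bounded in terms of $|V(H)|$. In the paper this is packaged neatly: the hats argument is isolated as Lemma~\ref{lemma-main1}, which yields the general characterisation Theorem~\ref{thm-maindens}, and then Theorem~\ref{thm-main1} follows in a few lines from Theorem~\ref{thm-maindens} together with Theorem~\ref{thm-clique}. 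Your direct inductive plan would have to absorb both of these ingredients into the length-$(k{+}1)$ case.
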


Theorem~\ref{thm-main1} is a consequence of the following new characterization of graph classes with bounded expansion.  
For a graph $G$, let $\ddin_k(G)$ denote the maximum of average degrees of graphs $H$ whose $(\le\!k)$-subdivision appears as an \emph{induced} subgraph
in $G$.  The \emph{$k$-subdivision} of a graph $H$ is the graph obtained from $H$ by subdividing each edge exactly $k$ times.
Let $\ddinex_k(G)$ denote denote the maximum of average degrees of graphs $H$ whose $k$-subdivision appears as an induced subgraph
in $G$.

\begin{theorem}\label{thm-maindens}
For a class of graphs $\GG$, the following statements are equivalent.
\begin{itemize}
\item[(a)] $\GG$ has bounded expansion.
\item[(b)] There exists a function $f:\bb{Z}_0^+\to\bb{Z}_0^+$ such that for every $G\in \GG$ and every non-negative integer $k$,
$\ddin_k(G)\le f(k)$.
\item[(c)] There exists a function $f:\bb{Z}_0^+\to\bb{Z}_0^+$ such that for every $G\in \GG$ and every non-negative integer $k$,
$\ddinex_k(G)\le f(k)$.
\end{itemize}
\end{theorem}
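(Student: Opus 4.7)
The implications $(a)\Rightarrow(b)\Rightarrow(c)$ are immediate from the definitions: every induced subgraph is a subgraph, so $\ddin_k(G)\le\dd_k(G)$; and every $k$-subdivision is a $(\le\!k)$-subdivision, so $\ddinex_k(G)\le\ddin_k(G)$. The short converse $(c)\Rightarrow(b)$ follows by a path-length pigeonhole: given an induced $(\le\!k)$-subdivision of $H$ in $G\in\GG$, partition $E(H)$ into at most $k+1$ classes according to the length of the corresponding path, and keep the largest class $E'\subseteq E(H)$, of size at least $|E(H)|/(k+1)$, all of whose paths have a common length $\ell\le k+1$. The restriction of the induced subdivision to $V(H)\cup\bigcup_{e\in E'}\operatorname{int}(P_e)$ is itself an induced subgraph of $G$ and is precisely the exact $(\ell-1)$-subdivision of $H'\colonequals(V(H),E')$; hence $(c)$ bounds the average degree of $H'$ by $f(\ell-1)$, and consequently the average degree of $H$ by $(k+1)\max_{\ell\le k}f(\ell)$.

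For the substantive direction $(c)\Rightarrow(a)$ I would equivalently prove $(b)\Rightarrow(a)$. Specializing $(b)$ to $k=0$ and using $\ddin_0=\dd_0$ shows $\GG$ is $f(0)$-degenerate. I would then proceed by induction on $k$ to produce a function $g$ with $\dd_k(G)\le g(k)$ for every $G\in\GG$. Given a $(\le\!k)$-subdivision $S$ of $H$ inside $G\in\GG$, with $H$ of very large average degree $d$, pass to a subgraph of $H$ of minimum degree at least $d/2$ and apply the same path-length pigeonhole to obtain an exact $\ell$-subdivision of a subgraph $H^*\subseteq H$ with $\ell\in\{0,\dots,k\}$ and average degree $\Omega(d/k)$. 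The case $\ell<k$ is absorbed by the inductive hypothesis, so the delicate situation is an exact $k$-subdivision of a dense $H^*$; from it I need to extract an \emph{induced} exact $k$-subdivision whose underlying graph still has large average degree, contradicting $(b)$.

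The extraction proceeds in three chord-elimination stages on $G[V(S)]$. First, replace each branch-to-branch path by a shortest path inside the induced subgraph on its current interior plus endpoints; any successful shortening produces a $(\le\!k-1)$-subdivision, which falls under the induction, so we may assume each path is induced and of length exactly $k+1$. Second, the degeneracy bound $|E(G[V(H^*)])|\le f(0)|V(H^*)|$ lets us discard edges of $H^*$ whose branch endpoints are adjacent in $G$, losing only a factor of roughly $f(0)$ in average degree. Third, branch-to-interior and interior-to-interior chords must be eliminated by deleting entire paths: the auxiliary conflict graph on $E(H^*)$ is sparse (by the $f(0)$-degeneracy of $G[V(S)]$), and a weighted greedy removal yields a subset $E''\subseteq E(H^*)$ for which the induced subgraph of $G$ on $V(H^*)\cup\bigcup_{e\in E''}\operatorname{int}(P_e)$ is exactly the $k$-subdivision of $(V(H^*),E'')$. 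The main obstacle I expect is making this greedy step quantitatively sharp, because the degeneracy bound controls only the total chord count and not its distribution across paths; a careful Ramsey-style or potential-function argument will be needed to prevent a small number of ``hub'' paths from forcing the removal of too many edges of $H^*$, so that the surviving graph $(V(H^*),E'')$ still has average degree tending to infinity with $d$.
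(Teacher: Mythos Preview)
Your overall architecture mirrors the paper's: the trivial chain $(a)\Rightarrow(b)\Rightarrow(c)$, induction on $k$ for the converse using $\ddinex_0=\dd_0$ as the base, isolating the edges whose paths have length exactly $k+1$, making each path induced, and using the $\dd_0$-bound to sparsify the interior--interior conflict graph on $E(H^*)$ (the paper colours this conflict graph and keeps one colour class; your greedy deletion would achieve the same).

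The gap you flag is genuine and is exactly where the paper imports outside machinery. After interior--interior chords are removed, the remaining obstruction is a bipartite graph with parts $A$ (surviving paths) and $B$ (branch vertices), where $e\in A$ is joined to $v\in B$ whenever an internal vertex of $P_e$ sees $v$ in $G$. Degeneracy bounds the \emph{total} number of such incidences but not per-vertex degrees, so a naive deletion of ``bad'' paths can discard everything: a single branch vertex may touch the interiors of all paths. The paper resolves this by quoting two lemmas of K\"uhn and Osthus. Lemma~\ref{lemma-inducehats} first restricts to paths whose $A$-degree is at most $4r$ (Markov on the total edge count shows at least half of them qualify) and then extracts an induced subgraph in which every surviving path sees only its own two branch endpoints, while retaining $\Omega(r^9)|B'|$ paths. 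Lemma~\ref{lemma-fixb} then absorbs the edges of $G[B]$ (it only needs $\chi(G[B])\le r$, not $B$ independent) and outputs an induced $1$-subdivision in the auxiliary graph, which pulls back to an induced $k$-subdivision in $G$. These two lemmas are precisely the ``Ramsey-style or potential-function argument'' you anticipate; they are cited, not re-proved.

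Two smaller points. Your Step~2 only removes $H^*$-edges whose endpoints happen to be $G$-adjacent; it does nothing about $G$-edges between branch vertices that are \emph{not} $H^*$-adjacent, and deleting $H^*$-edges cannot cure those. This is again handled by Lemma~\ref{lemma-fixb}. Also, your direct $(c)\Rightarrow(b)$ pigeonhole is not quite right: if some edges of $H$ outside $E'$ are realised by length-$1$ paths, those $G$-edges persist in the induced subgraph on $V(H)\cup\bigcup_{e\in E'}\mathrm{int}(P_e)$, so the restriction is not an exact $(\ell-1)$-subdivision. This shortcut is unnecessary anyway, since $(c)\Rightarrow(a)\Rightarrow(b)$.
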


The same argument gives a characterization of nowhere-dense graph classes.
\begin{theorem}\label{thm-maindens-nd}
For a class of graphs $\GG$, the following statements are equivalent.
\begin{itemize}
\item[(a)] $\GG$ is nowhere-dense.
\item[(b)] There exists a function $f:\bb{Z}_0^+\times \bb{Z}_0^+\to\bb{Z}_0^+$ subpolynomial in the second argument
such that for every non-negative integer $k$ and graph $G\in \GG$, every induced subgraph $G'$ of $G$ satisfies $\ddin_k(G')\le f(k,|V(G')|)$.
\item[(c)] There exists a function $f:\bb{Z}_0^+\times \bb{Z}_0^+\to\bb{Z}_0^+$ subpolynomial in the second argument
such that for every non-negative integer $k$ and graph $G\in \GG$, every induced subgraph $G'$ of $G$ satisfies
$\ddinex_k(G')\le f(k,|V(G')|)$.
\end{itemize}
\end{theorem}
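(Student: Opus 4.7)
The plan is to prove the three equivalences by the cyclic implications (a)$\Rightarrow$(b)$\Rightarrow$(c)$\Rightarrow$(a), the first two being almost immediate and the third carrying all the content. For (a)$\Rightarrow$(b), every induced $(\le k)$-subdivision is in particular a $(\le k)$-subdivision, so $\ddin_k(G')\le \dd_k(G')$ for every induced subgraph $G'$ of $G \in \GG$; the subgraph-density characterization of nowhere-denseness recalled in the introduction then supplies the required subpolynomial witness function. For (b)$\Rightarrow$(c), every $k$-subdivision is a $(\le k)$-subdivision, so $\ddinex_k \le \ddin_k$ and the same function works.

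The implication (c)$\Rightarrow$(a) is the substance of the theorem, and I would run exactly the same combinatorial argument as in the corresponding direction of Theorem~\ref{thm-maindens}, tracking polynomial losses in $|V(G)|$ rather than multiplicative constants. Arguing contrapositively, suppose that for some $k_0$ the quantity $\dd_{k_0}(G'')$ fails to be subpolynomial in $|V(G'')|$ over subgraphs $G''\subseteq G\in\GG$. Then some $G''$ contains a $(\le k_0)$-subdivision of a graph $H$ whose average degree grows super-subpolynomially in $|V(G'')|$. I would produce from this data an induced $k_1$-subdivision of a graph $H^*$ sitting inside an induced subgraph $G^* \subseteq G$, for some $k_1=k_1(k_0)$, where $|V(G^*)|$ is polynomial in $|V(G'')|$ and the average degree of $H^*$ is a positive constant fraction of that of $H$. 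Applying hypothesis (c) to $G^*$ then bounds this average degree subpolynomially in $|V(G^*)|$, and hence in $|V(G'')|$, contradicting the hypothesis.

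The extraction of the induced $k_1$-subdivision is the core combinatorial step and proceeds in two stages. First, a pigeonhole on the $k_0+1$ possible path lengths in the given $(\le k_0)$-subdivision isolates a subgraph $H_0 \subseteq H$ whose subdivision paths in $G''$ all share a common length $k_1+1$, at the cost of a factor of $k_0+1$ in average degree. Second, a K\"uhn--Osthus-style shortest-path refinement is performed inside the induced subgraph $G^* = G[V^*]$ of $G$ spanned by the branch vertex set $B$ together with all subdivision vertices: each subdivision path from $u$ to $v$ is replaced by a shortest $u$--$v$ path in $G^* \setminus (B \setminus \{u,v\})$, which shortcuts every chord. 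A further pigeonhole on the new, possibly smaller, common path length then yields an induced $k_1$-subdivision of some $H^* \subseteq H_0$, with average degree still a constant fraction of that of $H$ and with $|V(G^*)| \le |V(H)| + k_0|E(H)|$ polynomial in $|V(G'')|$.

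The main obstacle is ensuring that the shortest-path replacement actually preserves the subdivision structure, i.e.\ that the new paths are internally vertex-disjoint rather than colliding at shared shortcut vertices. I would handle this by carrying out the replacements iteratively with previously-used internal vertices removed, using a density argument to show that only a small (constant) fraction of the edges of $H_0$ need to be sacrificed to conflicts at each step. Because every loss incurred in this process is either a multiplicative constant in the average degree or a polynomial factor in $|V(G^*)|$, and because both constants and polynomial substitutions are absorbed inside either a constant bound (yielding Theorem~\ref{thm-maindens}) or a subpolynomial bound (yielding the present theorem), the same argument proves both equivalences simultaneously.
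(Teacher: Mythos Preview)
Your treatment of (a)$\Rightarrow$(b)$\Rightarrow$(c) matches the paper. The difficulty is entirely in (c)$\Rightarrow$(a), and there your proposal has a genuine gap.

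Replacing each subdivision path by a shortest $u$--$v$ path in $G^*\setminus(B\setminus\{u,v\})$ does make each individual path induced, and you correctly flag the vertex-disjointness issue. But that is not the real obstruction. For the union of the paths together with $B$ to be an \emph{induced} $k_1$-subdivision in $G$, you also need that (i) no edge of $G$ runs between an internal vertex of one path and an internal vertex of a different path, and (ii) no edge of $G$ runs from an internal vertex of a path to a branch vertex other than that path's two endpoints. Shortest-path replacement gives you neither: a vertex lying on the shortest $u$--$v$ path can perfectly well be adjacent in $G$ to an internal vertex of some other shortest path, or to some $w\in B\setminus\{u,v\}$, since those vertices were either absent from or irrelevant to the graph in which you computed the shortest path. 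Your ``density argument to sacrifice a constant fraction of edges'' is aimed only at shared vertices, not at these cross-edges, and there is no reason the number of cross-edges should be small relative to $|E(H_0)|$.

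This is exactly the content the paper offloads to the K\"uhn--Osthus machinery. The paper does not do a direct extraction; instead it proves the incremental bound $\dd_k(G)<\dd_{k-1}(G)+d_{r,k,s}$ with $d_{r,k,s}=O((rsk)^{12})$ (Lemma~\ref{lemma-main1}) and then sums over $k$. Inside that lemma, issue~(i) is handled by building an auxiliary graph on the edge set of $H_1$ recording cross-edges between path interiors, bounding its degeneracy via $\dd_0(G)\le s$, and passing to an independent set; issue~(ii) is handled by the two hat lemmas of K\"uhn and Osthus (Lemmas~\ref{lemma-inducehats} and~\ref{lemma-fixb}), which prune the bipartite graph between path-interiors and $B$ down to one where every ``midpoint'' has exactly two neighbours in $B$. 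The polynomial dependence $O((rsk)^{12})$ is what makes the resulting $g(k,n)$ subpolynomial in $n$. Your outline would need analogues of both steps before the shortest-path idea can be completed.
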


\section{The proofs}

Let $G$ be a bipartite graph with bipartition $(A,B)$.  A \emph{hat} over this bipartition is a $3$-vertex path
in $G$ with endpoints in $B$ (and the midpoint in $A$).  We say that a set $\PP$ of hats is \emph{uncrowded}
if any two hats in $\PP$ join distinct pair of vertices and have distinct midpoints.  We say that it is \emph{induced}
if the midpoint of each hat has exactly two neighbors in $B$, i.e., $\bigcup\PP$ is an induced subgraph of $G$.

K\"uhn and Osthus~\cite{kuhndens} proved the following.
\begin{lemma}[{K\"uhn and Osthus~\cite[Lemma~18]{kuhndens}}]\label{lemma-inducehats}
Let $r$ be a positive integer and let $G$ be a bipartite graph with bipartition $(A,B)$, such that each vertex of $A$ has degree at most $4r$.
If $G$ contains an uncrowded set of at least $\tfrac{r^{11}}{2^8}|B|$ hats, then $G$ has an induced subgraph $G'$ with bipartition
$(A',B')=(A\cap V(G'),B\cap V(G'))$ such that $B'\neq\emptyset$ and the set of all $3$-vertex paths in $G'$ with midpoints in $A'$
forms an induced uncrowded set of at least $\tfrac{r^9}{2^{15}}|B'|$ hats over $(A',B')$.
\end{lemma}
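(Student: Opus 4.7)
The plan is a random sampling argument. Fix a probability $p\in(0,1)$ (the arithmetic below forces $p=1/(2r)$) and let $B'\subseteq B$ be obtained by including each vertex of $B$ independently with probability $p$. Let $A'\subseteq A$ consist of those vertices $a$ such that $a$ is the midpoint of some hat $P\in\PP$ whose two endpoints both lie in $B'$ and such that every other neighbor of $a$ in $B$ lies outside $B'$. Put $G'=G[A'\cup B']$, so $A'=A\cap V(G')$ and $B'=B\cap V(G')$.

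Next I would check that $G'$ has the required structure. By construction each $a\in A'$ has exactly two neighbors in $B'$, namely the endpoints of the (unique, by uncrowdedness of $\PP$) hat of $\PP$ with midpoint $a$. So the only $3$-vertex path of $G'$ through $a$ is this hat, and the set $\PP'$ of all $3$-vertex paths of $G'$ with midpoints in $A'$ is in bijection with the surviving hats. It is induced since each midpoint has exactly two $B'$-neighbors, and it is uncrowded because the midpoints are distinct by construction and two distinct hats in $\PP'$ cannot join the same pair of endpoints (else the corresponding two hats in $\PP$ would violate uncrowdedness).

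It remains to pick $p$ so that, with positive probability, $\PP'$ is large compared to $B'$. A hat $P\in\PP$ with midpoint $a$ of degree $d_a\le 4r$ survives with probability $p^2(1-p)^{d_a-2}\ge p^2(1-p)^{4r-2}$. Writing $N=|\PP'|$ and $n=|B'|$, linearity gives
\[
\mathbb{E}[N]\ge |\PP|\,p^2(1-p)^{4r-2}\ge \tfrac{r^{11}}{2^8}|B|\,p^2(1-p)^{4r-2},\qquad \mathbb{E}[n]=p|B|.
\]
With $p=1/(2r)$ the estimate $(1-1/(2r))^{2r}\ge 1/4$ (valid for $r\ge 1$) yields $(1-p)^{4r-2}\ge(1-p)^{4r}\ge 1/16$, so $\mathbb{E}[N]\ge \tfrac{r^9}{2^{14}}|B|$. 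Consider $X=N-\tfrac{r^9}{2^{15}}n$; then
\[
\mathbb{E}[X]\ge \tfrac{r^9}{2^{14}}|B|-\tfrac{r^9}{2^{15}}\cdot\tfrac{|B|}{2r}=\tfrac{r^8|B|}{2^{16}}(4r-1)>0.
\]
Thus some outcome has $X>0$; in any such outcome $N\ge 1$, forcing $n\ge 2$ (so $B'\ne\emptyset$) and $N\ge \tfrac{r^9}{2^{15}}n$, as required.

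The main obstacle is the balancing act hidden in the choice of $p$: too large and a typical midpoint of degree up to $4r$ almost surely acquires a spurious third $B'$-neighbor, destroying the induced property; too small and the surviving count $N$ is swamped by the linear-in-$|B'|$ target. The hypothesis $|\PP|\ge r^{11}|B|/2^8$ is calibrated exactly so that $p=\Theta(1/r)$ absorbs both the $p^2$-loss from retaining both endpoints of a hat and the constant-factor loss from the decay $(1-p)^{4r}$, while still beating the target density $r^9/2^{15}$ by a factor of $\Theta(r)$. Confirming these margins fit together is the only substantive step; everything else is bookkeeping.
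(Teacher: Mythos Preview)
Your argument is correct and is precisely the kind of random-sampling proof one expects here: sample $B'$ at rate $p=1/(2r)$, keep only midpoints whose hat survives cleanly, and use linearity of expectation on $N-\tfrac{r^9}{2^{15}}|B'|$ to find a good outcome. The structural verification (each surviving midpoint has exactly two $B'$-neighbours, uncrowdedness is inherited from $\PP$, and $N\ge 1$ forces $|B'|\ge 2$) is all sound, as is the arithmetic---in particular $(1-\tfrac{1}{2r})^{2r}\ge \tfrac14$ for all $r\ge 1$ and the expectation of $X$ comes out positive with the stated margin.

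Note, however, that the present paper does not itself prove this lemma: it is quoted verbatim as Lemma~18 of K\"uhn and Osthus~\cite{kuhndens} and used as a black box. So there is no ``paper's own proof'' to compare against; your write-up is a self-contained proof of the cited result, and the method (random restriction to kill extra incidences while retaining a linear-in-$|B'|$ number of hats) is the natural one and matches the original K\"uhn--Osthus argument in spirit.
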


We also need another lemma of K\"uhn and Osthus~\cite{kuhndens} (the \emph{branch vertices} of a subdivision
of a graph $H$ are the vertices of the subdivision corresponding to the original vertices of $H$).
\begin{lemma}[{K\"uhn and Osthus~\cite[Lemma~20]{kuhndens}}]\label{lemma-fixb}
Let $r\ge 2^{25}$ be an integer.
Let $(A,B)$ be a partition of vertices of a graph $G$ such that $A$ is an independent set of $G$, $\chi(G[B])\le r$
and $\dd_0(G[B])\le r^3$.  Let $G'$ be the spanning bipartite subgraph of $G$ containing exactly the edges of $G$
with one end in $A$ and the other end in $B$.  If $G'$ contains an induced uncrowded set of at least $\tfrac{r^9}{2^{15}}|B|$ hats
over $(A,B)$, then $G$ contains an induced subgraph $G''$ such that $G''$ is the $1$-subdivision of a graph of average degree at least $r$,
with all branch vertices contained in $B$.
\end{lemma}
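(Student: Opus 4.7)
The plan is to reduce the lemma to a statement about the auxiliary graph $H$ on vertex set $B$ whose edges are the unordered pairs of endpoints of hats in $\PP$, so that $|E(H)|=|\PP|\ge r^9|B|/2^{15}$. Since $\PP$ is induced and $A$ is independent in $G$, every midpoint of a hat in $\PP$ has exactly two $G$-neighbours, both in $B$. Hence for \emph{any} $B^{\ast}\subseteq B$ independent in $G[B]$, taking $B^{\ast}$ together with the midpoints of the hats in $\PP$ whose two endpoints both lie in $B^{\ast}$ yields an induced $1$-subdivision of $H[B^{\ast}]$ in $G$. The task reduces to finding $B^{\ast}\subseteq B$ independent in $G[B]$ with $H[B^{\ast}]$ of average degree at least $r$.

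First I would clean up: discard every $H$-edge $\{b_1,b_2\}$ with $b_1b_2\in E(G[B])$, since these cannot contribute to $H[B^{\ast}]$ for independent $B^{\ast}$. As $|E(G[B])|\le r^3|B|/2$ and $r\ge 2^{25}$, the remaining $H'$ satisfies $|E(H')|\ge r^9|B|/2^{16}=2d|B|$ with $d:=r^9/2^{17}$. Iteratively deleting vertices of $H'$-degree less than $d$ produces $H_1$ on some $B_1\subseteq B$ of minimum degree at least $d$ (a routine edge count gives $|B_1|\ge 2d$). Fix a proper $r$-colouring $\phi$ of $G[B_1]$ with classes $C_1,\dots,C_r$. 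For each $v\in B_1$, pigeonhole on the colour distribution of $N_{H_1}(v)$ picks $c(v)\in[r]$ with $|N_{H_1}(v)\cap C_{c(v)}|\ge d/r$; a second pigeonhole on pairs $(\phi(v),c(v))\in[r]^2$ yields $i,j\in[r]$ and $V^{\ast}\subseteq C_i$ with $|V^{\ast}|\ge |B_1|/r^2$ such that every $v\in V^{\ast}$ has at least $d/r$ $H_1$-neighbours in $C_j$.

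If $i=j$, then $V^{\ast}\subseteq C_i$ is already independent in $G[B]$; averaging gives $H_1[C_i]$ average degree at least $d/r^3=r^6/2^{17}\gg r$, and a min-degree subgraph of $H_1[C_i]$ serves as $B^{\ast}$. If $i\ne j$, I would filter $V^{\ast}$ to $V^{\ast\ast}:=\{v\in V^{\ast}:d_{G[B]}(v,C_j)\le d/r^3\}$; by Markov applied to $\sum_{v\in V^{\ast}}d_{G[B]}(v,C_j)\le 2|E(G[B_1])|\le r^3|B_1|$, at most $2^{17}|B_1|/r^3\le|V^{\ast}|/2$ vertices are lost (using $r\ge 2^{18}$), so $|V^{\ast\ast}|\ge |B_1|/(2r^2)$ and each $v\in V^{\ast\ast}$ has $|W_v|\ge d/(2r)$ where $W_v:=(N_{H_1}(v)\cap C_j)\setminus N_{G[B]}(v)$. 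Include each $u\in C_j$ in a random set $S$ independently with probability $q:=r^3/d$ and set $B^{\ast}:=S\cup\{v\in V^{\ast\ast}:N_{G[B]}(v)\cap S=\emptyset\}$; then $B^{\ast}$ is automatically independent in $G[B]$. Since for $u\in W_v$ the events $\{u\in S\}$ and $\{N_{G[B]}(v)\cap S=\emptyset\}$ depend on disjoint coin flips (as $u\notin N_{G[B]}(v)$),
\[
E[|E(H_1[B^{\ast}])|]\ \ge\ \sum_{v\in V^{\ast\ast}}|W_v|\,q\,(1-q)^{d/r^3}\ \ge\ |V^{\ast\ast}|\cdot\frac{r^2}{2e^2},
\]
while $E[|B^{\ast}|]\le|V^{\ast\ast}|+q|C_j|\le 2|V^{\ast\ast}|$ (using $r\ge 2^{25}$). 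Thus $E[|E(H_1[B^{\ast}])|-(r/2)|B^{\ast}|]>0$ and some realisation produces the desired $B^{\ast}$.

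The main obstacle will be the three-way balancing in the off-diagonal case: the filter threshold $d/r^3$ must be small enough to keep $(1-q)^{d/r^3}$ bounded below for $q=r^3/d$, large enough to preserve most of $V^{\ast}$, and such that the expected $H_1$-contribution $q|W_v|\approx r^2$ per surviving vertex still dominates the $q|C_j|$-term in $E[|B^{\ast}|]$. The polynomial gap between $\dd_0(G[B])\le r^3$ and $|E(H)|/|B|=\Omega(r^9)$, sharpened by the hypothesis $r\ge 2^{25}$, is just wide enough to thread all three conditions simultaneously.
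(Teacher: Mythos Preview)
The paper does not prove this lemma at all: it is quoted verbatim from K\"uhn and Osthus~\cite[Lemma~20]{kuhndens} and used as a black box, so there is no in-paper argument to compare your attempt against.

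That said, your proposal is a correct self-contained proof, and it follows the natural line one would expect for this statement. The key reduction---that for any $B^\ast\subseteq B$ independent in $G[B]$, the set $B^\ast$ together with the midpoints of hats whose endpoints both lie in $B^\ast$ induces in $G$ precisely the $1$-subdivision of $H[B^\ast]$---is valid because each midpoint, being in the independent set $A$ and belonging to an \emph{induced} hat, has exactly two neighbours in $G$, both in $B$. Your cleanup (removing $H$-edges that are also $G[B]$-edges, passing to a subgraph of minimum degree $d=r^9/2^{17}$) and the double pigeonhole on $(\phi(v),c(v))$ are standard and the numerics check out. In the diagonal case $i=j$ the class $C_i$ itself already serves as $B^\ast$, since $H_1[C_i]$ has average degree at least $d/r^3=r^6/2^{17}\ge r$; the extra ``min-degree subgraph'' step you mention is unnecessary but harmless. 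In the off-diagonal case your random sparsification is sound: the independence of the events $\{u\in S\}$ and $\{N_{G[B]}(v)\cap S=\emptyset\}$ for $u\in W_v$ is exactly what the definition of $W_v$ guarantees, the estimate $(1-q)^{d/r^3}\ge e^{-2}$ holds since $q\cdot d/r^3=1$ and $q=2^{17}/r^6\le 1/2$, and the inequalities $q|C_j|\le|V^{\ast\ast}|$ and $2^{17}|B_1|/r^3\le|V^\ast|/2$ both follow comfortably from $r\ge 2^{25}$. The first-moment conclusion $E\bigl[|E(H_1[B^\ast])|-\tfrac{r}{2}|B^\ast|\bigr]>0$ then yields a realisation with $H_1[B^\ast]$ (hence $H[B^\ast]$, which has the same vertex set and at least as many edges) of average degree exceeding $r$.

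Your closing remark about the ``three-way balancing'' is accurate: the hypotheses $\chi(G[B])\le r$, $\dd_0(G[B])\le r^3$, and hat density $\Omega(r^9)$ are calibrated so that the filter threshold $d/r^3$, the sampling rate $q=r^3/d$, and the class sizes all fit together, and this is indeed where the specific exponents in the lemma's statement come from.
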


Combining these lemmas with Theorem~\ref{thm-kuhnost} gives the following result.

\begin{lemma}\label{lemma-main1}
For all integers $r,k,s\ge 1$, there exists an integer $d_{r,k,s}=O((rsk)^{12})$ such that
for every graph $G$, if $\dd_0(G)\le s$ and $\ddinex_k(G)<r$,
then $\dd_k(G)<\dd_{k-1}(G)+d_{r,k,s}$.
\end{lemma}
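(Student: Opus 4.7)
The plan is to argue by contrapositive: suppose $\dd_0(G)\le s$ and $\dd_k(G)\ge\dd_{k-1}(G)+d$ for some $d=\Theta((rsk)^{12})$ to be chosen, and exhibit in $G$ an induced $k$-subdivision of a graph of average degree at least $r$, contradicting $\ddinex_k(G)<r$.

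\emph{Reduction to a path-induced $k$-subdivision of a dense graph.} Take $H$ of average degree $\dd_k(G)$ realizing a $(\le k)$-subdivision of itself in $G$ via chosen paths $\{P_e:e\in E(H)\}$. Edges that can be realized in $G$ with at most $k-1$ subdivision vertices---namely those subdivided fewer than $k$ times in the realization, together with those whose $P_e$ has a chord in $G$ (the chord yielding a shorter $u_e$-$v_e$ path with at most $k-1$ internal vertices)---together form a subgraph that is $(\le k{-}1)$-subdivided in $G$, hence of average degree at most $\dd_{k-1}(G)$. Discarding them leaves $H^\flat\subseteq H$ with $\bar{d}(H^\flat)\ge d$ whose $k$-subdivision realizes each remaining $P_e$ as an induced path of $G$. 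Equivalently, $G$ contains an uncrowded family of $|E(H^\flat)|\ge|V(H^\flat)|d/2$ ``$k$-hats'', that is, $(k+2)$-vertex paths with endpoints in $B:=V(H^\flat)$, internal vertices in $A:=\bigcup_{e}(V(P_e)\setminus\{u_e,v_e\})$, and no chord.

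\emph{The case $k=1$.} Here a $k$-hat is an ordinary hat, and the standard clean-up works. Set $R:=\max(r,s+1,2^{25})$. Because $\dd_0(G)\le s$ implies that $G$ is $s$-degenerate and so admits a proper $(s+1)$-colouring, restrict $A$ to a single colour class (losing a factor $s+1$ and making $A$ independent in $G$); then, by Markov using the $\dd_0(G)\le s$ bound on the $A$-$B$ bipartite edge count, restrict $A$ further to midpoints of $G$-degree at most $4R$ in $B$. For $d$ of the stated order this leaves more than $R^{11}/2^8\cdot|B|$ uncrowded hats, so Lemma~\ref{lemma-inducehats} produces an induced uncrowded family of at least $R^9/2^{15}\cdot|B'|$ hats in an induced bipartite subgraph $(A',B')$. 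Since $A'$ is independent in $G$, $\chi(G[B'])\le s+1\le R$, and $\dd_0(G[B'])\le s\le R^3$, Lemma~\ref{lemma-fixb} then yields an induced $1$-subdivision in $G$ of a graph of average degree at least $R\ge r$, contradicting $\ddinex_1(G)<r$.

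\emph{The general case $k\ge 2$.} The same scheme is applied to $k$-hats in place of ordinary hats, using $k$-hat analogues of Lemmas~\ref{lemma-inducehats} and~\ref{lemma-fixb} proved by essentially the same K\"uhn--Osthus arguments but with polynomial-in-$k$ losses in both the hat-count threshold and the final density; this produces an induced $k$-subdivision in $G$ of a graph of average degree at least $r$, the required contradiction. The main obstacle lies precisely in this generalization: one must verify that the Ramsey/induction arguments underlying Lemmas~\ref{lemma-inducehats} and~\ref{lemma-fixb} go through for $k$-hats with only polynomial-in-$k$ loss, and in particular that the chord-shortening trick used in the reduction step absorbs cross-path adjacencies at every interior layer into the $\dd_{k-1}(G)$ budget. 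This polynomial-in-$k$ loss is what produces the factor $k^{O(1)}$ in the final bound $d_{r,k,s}=O((rsk)^{12})$.
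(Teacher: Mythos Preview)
Your $k=1$ case is fine and essentially matches the paper. The gap is in your treatment of $k\ge 2$.

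First, the claim that ``the chord-shortening trick \ldots\ absorbs cross-path adjacencies at every interior layer into the $\dd_{k-1}(G)$ budget'' is not correct. A chord on $P_e$ lets you realize $e$ itself with fewer internal vertices, so such $e$ can indeed be pushed into the $\dd_{k-1}$ budget. But an edge of $G$ between an internal vertex of $P_{e_1}$ and an internal vertex of $P_{e_2}$ for $e_1\ne e_2$ does not shorten either path, so it cannot be absorbed this way. These cross-path edges are exactly what prevents the union of your $k$-hats from being an induced subgraph of $G$, and you have no mechanism to remove them.

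Second, appealing to ``$k$-hat analogues of Lemmas~\ref{lemma-inducehats} and~\ref{lemma-fixb}'' that are ``proved by essentially the same K\"uhn--Osthus arguments'' is not a proof; you yourself flag this as ``the main obstacle,'' and it is left entirely unverified.

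The paper sidesteps both issues with a single device: it never generalizes the hat lemmas. Instead it builds an auxiliary conflict graph $G_1$ on $E(H_1)$ in which $e_1e_2$ is an edge whenever $G$ has a cross-path edge between the interiors of $P_{e_1}$ and $P_{e_2}$; the bound $\dd_0(G)\le s$ gives $\dd_0(G_1)\le sk$, so a colour class yields $H_2\subseteq H_1$ of average degree $\ge d/(sk+1)$ with \emph{no} cross-path edges. Now each path $P_e$ is contracted to a single vertex $e$, giving a bipartite auxiliary graph on $(E(H_2),B)$ to which the \emph{original} Lemmas~\ref{lemma-inducehats} and~\ref{lemma-fixb} apply verbatim. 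The resulting induced $1$-subdivision in the auxiliary graph pulls back to an induced $k$-subdivision in $G$, because the paths are induced, cross-path edges were killed, extra edges to $B$ were killed by Lemma~\ref{lemma-inducehats}, and edges within the branch set were killed by Lemma~\ref{lemma-fixb}. This reduction-to-$k=1$ is the missing idea.
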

\begin{proof}
Without loss of generality, we can assume that $r\ge \max(2^{25},s+1,sk/2)$.
Let $d_{r,k,s}=\tfrac{r^{11}(sk+1)}{2^6}$.

Suppose for a contradiction that $\dd_k(G)\ge \dd_{k-1}(G)+d_{r,k,s}$.  Let $H$ be a graph of average degree at least
$\dd_{k-1}(G)+d_{r,k,s}$ whose $(\le\!k)$-subdivision appears as a subgraph of $G$.  That is, there exists a function
$\varphi$ that assigns to vertices of $H$ distinct vertices of $G$ and to edges of $H$ paths of length at most $k+1$ in $G$,
such that for every $uv\in E(H)$,
the path $\varphi(uv)$ has endpoints $\varphi(u)$ and $\varphi(v)$ and its internal vertices do not belong to $\varphi(V(H))$, and for distinct edges $e_1,e_2\in E(H)$, the
paths $\varphi(e_1)$ and $\varphi(e_2)$ do not intersect except possibly in their endpoints.  
Without loss of generality, we can assume that $\varphi(e)$ is an induced path in $G$ for every $e\in E(H)$.
Let $B=\varphi(V(H))$.  Let $H_1$ be the subgraph of $H$ consisting of the edges such that the path $\varphi(e)$
has length exactly $k+1$.  Since $H$ has average degree at least $\dd_{k-1}(G)+d_{r,k,s}$ and a $(\le\!k-1)$-subdivision
of $H-E(H_1)$ is a subgraph of $G$, we conclude that the graph $H_1$ has average degree at least $d_{r,k,s}$.

Let $G_1$ be an auxiliary graph with vertex set $E(H_1)$, such that distinct $e_1,e_2\in E(H_1)$ are adjacent in $G_1$ if and
only if there exists an edge of $G$ with one end in an internal vertex of $\varphi(e_1)$ and the other end in an internal vertex of $\varphi(e_2)$.
Let $C$ be any subset of $E(H_1)=V(G_1)$, and let $D$ be the union of internal vertices of the paths $\varphi(e)$ for $e\in C$;
we have $|D|=k|C|$, and $|E(G_1[C])|\le |E(G[D])|\le \tfrac{\dd_0(G)}{2}|D|\le \tfrac{sk}{2}|C|$. Hence, $\dd_0(G_1)\le sk$,
and in particular $\chi(G_1)\le sk+1$.  Hence, $G_1$ contains an independent set $S$ of size at least $\tfrac{|E(H_1)|}{sk+1}$.
Let $H_2$ be the spanning subgraph of $H_1$ with edge set $S$, and note that the average degree of $H_2$ is at least $\tfrac{d_{r,k,s}}{sk+1}=\tfrac{r^{11}}{2^6}$.
By the choice of $S$, for any distinct $e_1,e_2\in E(H_2)$, there are no edges between the internal vertices of $\varphi(e_1)$ and $\varphi(e_2)$ in $G$.

Let $A_2=E(H_2)$ and let $G_2$ be an auxiliary bipartite graph with bipartition $(A_2,B)$, where $ev$ with $e\in E(H_2)$ and $v\in B$ is an edge of $G_2$ if and only if there
exists an edge of $G$ between an internal vertex of $\varphi(e)$ and $v$.  Let $D_2$ be the union of internal vertices of the paths
$\varphi(e)$ for $e\in A_2$.  Since the average degree of $H_2$ is greater than $2$, we have $|B|<|A_2|\le |D_2|$.
Note that $|E(G_2)|\le |E(G[D_2\cup B])|\le \tfrac{\dd_0(G)}{2}|D_2\cup B|\le s|D_2|\le sk|A_2|\le 2r|A_2|$.
Hence, less than half of the vertices of $A_2$ have degree more than $4r$ in $G_2$.  Let $A_2'\subseteq A_2$ consist of
vertices of $A_2$ whose degree in $G_2$ is at most $4r$, and let $G'_2$ be the induced subgraph of $G_2$
with bipartition $(A'_2,B)$.  We have $|A_2'|\ge |A_2|/2=|E(H_2)|/2\ge \tfrac{r^{11}}{2^8}|B|$.
Let $\PP$ be the set of paths $uev$ in $G_2$, where $e\in A_2'$ and $u$ and $v$ are the endpoints of $\varphi(e)$;
then $\PP$ is an uncrowded set of at least $\tfrac{r^{11}}{2^8}|B|$ hats over $(A'_2,B)$.  Let $G_3$ with bipartition $(A_3,B_3)$
be the induced subgraph of $G'_2$ obtained by applying Lemma~\ref{lemma-inducehats}, and let $H_3$ be the subgraph of
$H_2$ with vertex set $\varphi^{-1}(B_3)$ and edge set $A_3$.  We have $|E(H_3)|=|A_3|\ge \tfrac{r^9}{2^{15}}|B_3|$,
and for each $e\in E(H_3)$, the internal vertices of $\varphi(e)$ have no neighbors in $B_3$ other than the endpoints of $\varphi(e)$.

Finally, we consider an auxiliary graph $G_4=G_3\cup G[B_3]$.
Note that $\dd_0(G_3[B_3])=\dd_0(G[B_3])\le s\le r^3$ and $\chi(G_3[B_3])\le \dd_0(G[B_3])+1\le s+1\le r$.  
Let $G_5$ be the induced bipartite subgraph of $G_4$ obtained by applying Lemma~\ref{lemma-fixb}, with bipartition
$(A_5,B_5)=(A_3\cap V(G_5),B_3\cap V(G_5))$.  Then $\varphi$ restricted to $A_5\cup B_5$ shows that $G$ contains
the $k$-subdivision of a graph of average degree at least $r$ as an induced subgraph, and thus
$\ddinex_k(G)\ge r$; this is a contradiction.
\end{proof}

The new characterizations of classes with bounded expansion and nowhere-dense classes now readily follow.

\begin{proof}[Proof of Theorem~\ref{thm-maindens}]
For every graph $G$ and a non-negative integer $k$, we have $\dd_k(G)\ge\ddin_k(G)\ge\ddinex_k(G)$,
and thus $\text{(a)}\Rightarrow\text{(b)}\Rightarrow\text{(c)}$.

Suppose now that $f:\bb{Z}_0^+\to\bb{Z}_0^+$ is a function such that for every $G\in \GG$ and every non-negative integer $k$,
$\ddinex_k(G)\le f(k)$.
Let $g(0)=f(0)$ and for every positive integer $k$, let $d_{r,k,s}$ be the constant from Lemma~\ref{lemma-main1}, where $r=f(k)+1$ and $s=f(0)$,
and let $g(k)=g(k-1)+d_{r,k,s}$.
Consider a graph $G\in \GG$.  By induction, we will show that $\dd_k(G)\le g(k)$ for every non-negative integer $k$.
For $k=0$, we have $\dd_0(G)=\ddinex_0(G)\le f(0)=g(0)$; hence, we can assume that $k\ge 1$ and that
$\dd_{k-1}(G)\le g(k-1)$.  However, then Lemma~\ref{lemma-main1} implies
$\dd_k(G)<\dd_{k-1}(G)+d_{f(k)+1,k,f(0)}\le g(k-1)+d_{f(k)+1,k,f(0)}=g(k)$.
We conclude that $\GG$ has bounded expansion, and thus $\text{(c)}\Rightarrow\text{(a)}$.
\end{proof}

\begin{proof}[Proof of Theorem~\ref{thm-maindens-nd}]
Again, the implications $\text{(a)}\Rightarrow\text{(b)}\Rightarrow\text{(c)}$ are trivial.

Suppose now that $f:\bb{Z}_0^+\times \bb{Z}_0^+\to\bb{Z}_0^+$ is a function subpolynomial in the second argument
such that for every non-negative integer $k$ and graph $G\in\GG$,
every induced subgraph $G'$ of $G$ satisfies $\ddinex_k(G')\le f(k,|V(G')|)$.
We can without loss of generality assume that $f$ is non-decreasing in the second argument.
Let us define $g(0,n)=f(0,n)$.  For any positive integer $k$, let $d_{r(k,n),k,s(n)}$ be the constant from Lemma~\ref{lemma-main1},
where $r(k,n)=f(k,n)+1$ and $s(n)=f(0,n)$,
and let $g(k,n)=g(k-1,n)+d_{r(k,n),k,s(n)}$.  As in the proof of Theorem~\ref{thm-maindens}, induction on $k$ shows that
$\dd_k(G')\le g(k,|V(G')|)$ for every (induced) subgraph $G'$ of a graph $G\in \GG$.  Furthermore, for any fixed $k$,
$$g(k,n)=f(0,n)+\sum_{t=1}^k d_{r(t,n),t,s(n)}=f(0,n)+\sum_{t=1}^k O((f(t,n)f(0,n)t)^{12}),$$
and thus $g$ is subpolynomial in the second argument.  It follows that $\GG$ is nowhere-dense, and thus $\text{(c)}\Rightarrow\text{(a)}$.
\end{proof}

To derive Theorem~\ref{thm-main1}, we also need the following result.

\begin{theorem}[Koml{\'o}s and Szemer{\'e}di~\cite{KoSz}, Bollob\'as and Thomason~\cite{BoTh}]\label{thm-clique}
There exists $c>0$ such that for every positive integer $n$, every graph of average degree at least $cn^2$ contains
a subdivision of $K_n$ as a subgraph.
\end{theorem}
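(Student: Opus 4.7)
The plan is to build a $K_n$-subdivision by fixing $n$ branch vertices inside a high-minimum-degree subgraph and iteratively linking each pair by a short, internally vertex-disjoint path. First I would apply the standard density-to-minimum-degree reduction: every graph of average degree $d$ contains a subgraph of minimum degree at least $d/2$ (delete vertices of degree below $d/4$ and iterate). Hence I may pass to a subgraph $H\subseteq G$ of minimum degree $\delta\ge cn^2/2$, losing only a constant factor.

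Next, I would choose arbitrary $v_1,\ldots,v_n\in V(H)$ as branch vertices, order the $\binom{n}{2}$ pairs, and process them greedily. When processing $\{v_i,v_j\}$, let $W$ be the union of the internal vertices of the previously constructed paths together with the other $n-2$ branch vertices, and seek a $v_iv_j$-path in $H-W$. If each such path can be chosen of length at most $L$, then throughout the construction $|W|\le\binom{n}{2}L+n$, so the task reduces to producing short paths and checking that this bound stays below a safe threshold determined by $\delta$.

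The heart of the proof is the following linking lemma I would aim for: in a graph of minimum degree $\delta$ on $N$ vertices, given any $W$ with $|W|\le\delta/10$ and any $u,v$ outside $W$, there is a $uv$-path in $H-W$ of length $O(\log N/\log\delta)$. I would prove it by a bidirectional BFS from $u$ and $v$ inside $H-W$: since every frontier vertex keeps at least $\delta-|W|\ge 9\delta/10$ usable neighbours, a Mader-style averaging on successive layers shows each ball reaches half of $V(H)$ within $O(\log N/\log\delta)$ steps, after which the two balls must intersect.

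The main obstacle is that this length bound is only $O(1)$ when $N$ is comparable to $\delta$, which is not automatic if the ambient graph is enormous. To address this I would truncate $H$ in advance to a subgraph of order polynomial in $\delta$ while keeping minimum degree $\Omega(n^2)$: take a BFS ball of bounded radius around a suitably chosen vertex (which exists with many vertices because of the minimum-degree condition) and then iterate the degree-trimming from the first reduction inside the ball to restore the minimum degree. Plugging this bounded-size subgraph into the linking lemma gives paths of constant length, so $\binom{n}{2}L+n\ll\delta$ provided $c$ is taken large enough; the greedy procedure then never gets stuck and produces the desired $K_n$-subdivision.
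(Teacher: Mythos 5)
The paper does not prove this statement: it is imported as a black box from Koml\'os--Szemer\'edi and Bollob\'as--Thomason, so there is no internal proof to compare against. Your proposal is an attempt to prove the cited theorem from scratch, and it has a fundamental gap.

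The linking lemma at the heart of your argument is false. Minimum degree $\delta$ alone gives no expansion and no robust connectivity: take two copies of $K_{\delta+1}$ glued at a single vertex $w$. This graph has minimum degree $\delta$, yet removing the single vertex $w$ (so $|W|=1\le\delta/10$) disconnects a vertex $u$ in one clique from a vertex $v$ in the other, and there is no $uv$-path at all in $H-W$. Your BFS argument breaks at exactly this point: the fact that every frontier vertex retains $9\delta/10$ usable neighbours does not force the ball to grow, because those neighbours may all lie inside the ball already. Supplying the missing expansion is precisely the hard content of the Koml\'os--Szemer\'edi proof (their theory of finding expander subgraphs inside sparse graphs); without it, the greedy linking of arbitrary branch vertices is essentially Mader's original scheme, which is only known to yield bounds exponential in $n$, whereas the quadratic bound $cn^2$ is the whole point of the theorem. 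The truncation step is also unavailable: a $d$-regular graph of large girth has no small subgraph of minimum degree even $3$ (a graph of minimum degree $3$ and girth $g$ has at least on the order of $2^{g/2}$ vertices), so you cannot in general pass to a subgraph of order polynomial in $\delta$ while keeping minimum degree $\Omega(n^2)$. For the purposes of the present paper none of this matters --- the theorem is correctly used as a citation --- but as a proof of the statement itself the proposal does not go through.
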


\begin{proof}[Proof of Theorem~\ref{thm-main1}]
Let $n=|V(H)|$, and let $c$ be the constant from Theorem~\ref{thm-clique}.  Let $d$ be the constant
of Corollary~\ref{cor-kuhnost} for $H$ and $s$.  Let $f(0)=d$ and let $f(k)=cn^2$ for every $k\ge 1$.

Consider any graph $G\in \GG$.  By assumptions and Corollary~\ref{cor-kuhnost}, every induced subgraph of $G$
has average degree at most $d$, and thus $\ddinex_0(G)\le f(0)$.
Consider any positive integer $k$ and let $H_1$ be any graph whose $k$-subdivision appears in $G$ as an induced subgraph $H'_1$.
Since $H'_1$ does not contain a subdivision of $H$ as an induced subgraph, and each edge of $H_1$ is subdivided at least once to obtain $H'_1$,
we conclude that $H_1$ does not contain a subdivision of $H$ as a subgraph.  Consequently, $H_1$ does not contain a subdivision of $K_n$ as
a subgraph, and the average degree of $H_1$ is less than $cn^2$ by Theorem~\ref{thm-clique}.  It follows that
$\ddinex_k(G)<cn^2=f(k)$ for every positive integer $k$.  Hence, $\GG$ has bounded expansion by Theorem~\ref{thm-maindens}.
\end{proof}

\section*{Acknowledgments}

I would like to thank Ken-ichi Kawarabayashi for fruitful discussions that motivated this result,
and in particular for pointing my attention to the result of K\"uhn and Osthus.

\bibliographystyle{siam}
\bibliography{subdivbexp}

\end{document}